\newenvironment{aenumerate}{%
	\begin{enumerate}[label=(\alph{*}), ref=(\alph{*})]
}{%
	\end{enumerate}%
}
\tikzset{commutative diagrams/arrow style=math font}
\newcommand{\Dmod}{\mathscr{D}}
\newcommand{\Mmod}{\mathcal{M}}
\newcommand{\shT}{\mathscr{T}}
\newcommand{\derR}{\mathbf{R}}
\newcommand{\decal}[1]{\lbrack #1 \rbrack}
\newcommand{\shH}{\mathcal{H}}
\newcommand{\tensor}{\otimes}
\newcommand{\shHom}{\mathcal{H}\hspace{-1pt}\mathit{om}}
\newcommand{\ZZ}{\mathbb{Z}}
\newcommand{\CC}{\mathbb{C}}
\newcommand{\menge}[2]{\bigl\{ \thinspace #1 \thinspace\thinspace \big\vert%
\thinspace\thinspace #2 \thinspace \bigr\}}
\DeclareMathOperator{\im}{im}
\DeclareMathOperator{\id}{id}
\DeclareMathOperator{\Supp}{Supp}
\DeclareMathOperator{\Sym}{Sym}
\DeclareMathOperator{\gr}{gr}
\DeclareMathOperator{\Var}{Var}
\DeclareMathOperator{\Ch}{Ch}
\DeclareMathOperator{\Alb}{Alb}
\DeclareMathOperator{\Pic}{Pic}
\newcommand{\define}[1]{\emph{#1}}
\newcommand{\shf}[1]{\mathscr{#1}}
\newcommand{\OX}{\shf{O}_X}
\newcommand{\OmX}{\Omega_X}
\newcommand{\fu}{f^{\ast}}
\newcommand{\fl}{f_{\ast}}
\newcommand{\pu}{p^{\ast}}
\newcommand{\piu}{\pi^{\ast}}
\newcommand{\hu}{h^{\ast}}
\newcommand{\varphiu}{\varphi^{\ast}}
\newcommand{\varphil}{\varphi_{\ast}}
\newcommand{\hl}{h_{\ast}}
\newcommand{\fp}{f_{+}}
\newcommand{\hp}{h_{+}}
\newcommand{\shF}{\shf{F}}
\newcommand{\shE}{\shf{E}}
\newcommand{\shO}{\shf{O}}
\let\@@seccntformat\@seccntformat
\renewcommand*{\@seccntformat}[1]{%
  \expandafter\ifx\csname @seccntformat@#1\endcsname\relax
    \expandafter\@@seccntformat
  \else
    \expandafter
      \csname @seccntformat@#1\expandafter\endcsname
  \fi
    {#1}%
}
\newcommand*{\@seccntformat@subsection}[1]{%
  \textbf{\csname the#1\endcsname.}
}
\let\@paragraph\paragraph
\renewcommand*{\paragraph}[1]{%
	\vspace{0.3\baselineskip}%
	\@paragraph{\textit{#1}}%
}
\newtheorem{theorem}[equation]{Theorem}
\newtheorem*{theorem*}{Theorem}
\newtheorem{lemma}[equation]{Lemma}
\newtheorem*{lemma*}{Lemma}
\newtheorem{corollary}[equation]{Corollary}
\newtheorem{proposition}[equation]{Proposition}
\newtheorem*{proposition*}{Proposition}
\newtheorem{conjecture}[equation]{Conjecture}
\theoremstyle{definition}
\newtheorem*{definition*}{Definition}
\theoremstyle{remark}
\newtheorem*{example*}{Example}
\newtheorem*{problem*}{Problem}
\theoremstyle{plain}
\newcommand{\theoremref}[1]{\hyperref[#1]{Theorem~\ref*{#1}}}
\newcommand{\lemmaref}[1]{\hyperref[#1]{Lemma~\ref*{#1}}}
\newcommand{\definitionref}[1]{\hyperref[#1]{Definition~\ref*{#1}}}
\newcommand{\propositionref}[1]{\hyperref[#1]{Proposition~\ref*{#1}}}
\newcommand{\conjectureref}[1]{\hyperref[#1]{Conjecture~\ref*{#1}}}
\newcommand{\corollaryref}[1]{\hyperref[#1]{Corollary~\ref*{#1}}}
\newcommand{\exampleref}[1]{\hyperref[#1]{Example~\ref*{#1}}}
\newcounter{intro}
\newtheorem{intro-conjecture}[intro]{Conjecture}
\newtheorem{intro-corollary}[intro]{Corollary}
\newtheorem{intro-theorem}[intro]{Theorem}
\newcommand{\OmA}{\Omega_A}
\newcommand{\OmB}{\Omega_B}
\newcommand{\Ah}{\hat{A}}
\newcommand{\omY}{\omega_Y}
\newcommand{\OmY}{\Omega_Y}
\newcommand{\OY}{\shO_Y}
\newcommand{\df}{\mathit{df}}
\newcommand{\newpar}[1]{\subsection{\texorpdfstring{}{}}}
\newcommand{\parref}[1]{\hyperref[#1]{\S\ref*{#1}}}
\newcommand{\OA}{\mathscr{O}_A}
\newcommand{\omX}{\omega_X}
\newcommand{\Pal}{P_{\alpha}}
\begin{document}

\title{Kodaira dimension and zeros of holomorphic one-forms}


\author{Mihnea Popa}
\address{Department of Mathematics, University of Illinois at Chicago,
851 S. Morgan Street, Chicago, IL 60607, USA} 
\email{\tt mpopa@math.uic.edu}

\author{Christian Schnell}
\address{Department of Mathematics, Stony Brook University,
Stony Brook, NY 11794, USA}
\email{\tt cschnell@math.sunysb.edu}

\begin{abstract}
We show that every holomorphic one-form on a smooth complex projective variety of general
type must vanish at some point. The proof uses generic vanishing theory for
Hodge modules on abelian varieties.
\end{abstract}

\subjclass[2000]{14E30; 14F10}
\keywords{Holomorphic one-form; Zero locus; Abelian variety; Kodaira dimension;
Hodge module; Generic vanishing theory}

\maketitle

\section*{Introduction}

\newpar{Two conjectures}

In this article, we use results about Hodge modules on abelian varieties
\cite{PopaSchnell} to prove two conjectures about zeros of holomorphic
one-forms on smooth complex projective varieties. One conjecture was
formulated by Hacon and Kov\'acs \cite{HK} and by Luo and Zhang \cite{LZ}; the
former partly attribute the question to Carrell \cite{Carrell}, and also explain 
why it is natural to consider varieties of general type.

\begin{conjecture}[Hacon-Kov\'acs, Luo-Zhang] \label{conj:main}
If $X$ is a smooth complex projective variety of general type, then the zero locus
\[
	Z(\omega) = \menge{x \in X}{\omega(T_x X) = 0}
\]
of every global holomorphic one-form $\omega \in H^0(X, \OmX^1)$ is
nonempty.
\end{conjecture}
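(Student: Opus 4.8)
The plan is to argue by contradiction: assuming that $X$ is of general type but that some nonzero $\omega \in H^0(X, \Omega_X^1)$ is nowhere vanishing, I will contradict the bigness of $\omega_X$ by means of the generic vanishing theorem for Hodge modules from \cite{PopaSchnell}. First I would reduce to the Albanese map. Since $H^0(X, \Omega_X^1) \cong H^0(A, \Omega_A^1)$ for $A = \Alb(X)$, the form $\omega$ is the pullback $a^\ast \eta$ of a translation-invariant one-form under the Albanese map $a \colon X \to A$, and $\omega \neq 0$ forces $\dim A \geq 1$ and $a$ nonconstant.

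The next step is the Koszul criterion for non-vanishing. Wedging with $\omega$ turns the holomorphic forms into a complex
\[
	\OX \xrightarrow{\wedge \omega} \Omega_X^1 \xrightarrow{\wedge \omega} \cdots
	\xrightarrow{\wedge \omega} \Omega_X^n = \omega_X, \qquad n = \dim X,
\]
which is exact at a point $x$ exactly when $\omega(x) \neq 0$; thus $Z(\omega) = \emptyset$ is equivalent to exactness of this complex of sheaves. Exactness is local, so it survives after tensoring with any $a^\ast P$, $P \in \Ah = \Pic^0(A)$, and therefore the hypercohomology of $(\Omega_X^\bullet \otimes a^\ast P, \wedge \omega)$ vanishes for every $P$.

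The heart of the argument is to feed this into generic vanishing. Wedging with $\omega = a^\ast \eta$ is, up to the standard identifications, the derivative in the direction of $\Ah$ determined by $\eta$ of the family of twisted de Rham complexes of $X$; this is the ``derivative complex'' whose Hodge-theoretic behaviour is governed by the pushforward $a_+ \QQ_X^H$. Exactness of the Koszul complex after every twist by $P \in \Ah$ therefore translates into the vanishing of all the cohomology support loci attached to the graded de Rham pieces of $a_+ \QQ_X^H$. But the generic vanishing theorem of \cite{PopaSchnell} for polarizable Hodge modules controls exactly these loci — and crucially for all graded pieces simultaneously, not merely for the canonical piece $a_\ast \omega_X$. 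Examining the hypercohomology spectral sequence $E_1^{p,q} = H^q(X, \Omega_X^p \otimes a^\ast P)$, generic vanishing forces, for generic $P$, the vanishing of the terms that could otherwise cancel the corner $E_1^{n,0} = H^0(X, \omega_X \otimes a^\ast P)$; since $X$ is of general type, $\omega_X$ is big, this corner is nonzero, and it survives to contribute nonzero hypercohomology. This contradicts the exactness established above, so $Z(\omega) \neq \emptyset$.

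I expect the main obstacle to be exactly this third step: making rigorous the passage from the naive Koszul complex to the filtered de Rham complex of $a_+ \QQ_X^H$, and extracting from the Hodge-module generic vanishing theorem the precise vanishing needed to protect the canonical corner of the spectral sequence. The preliminary reductions (to the Albanese map and to the Koszul criterion) are formal, whereas this step is where the full strength of \cite{PopaSchnell} — generic vanishing for all graded pieces of the de Rham complex of a Hodge module, together with the bigness of $\omega_X$ for a variety of general type — is genuinely required.
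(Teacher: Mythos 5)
Your first two steps are sound, and they coincide with the paper's own motivational sketch: reducing to the Albanese map, and observing that if $Z(\omega) = \emptyset$ then the Koszul complex $\bigl( \Omega_X^{\bullet} \tensor a^{\ast} P, \wedge \omega \bigr)$ is exact, so its hypercohomology vanishes for every $P \in \Pic^0(A)$. The fatal gap is in your third step, in the single claim ``since $X$ is of general type, $\omega_X$ is big, this corner is nonzero.'' Bigness of $\omega_X$ says nothing about $H^0(X, \omega_X \tensor a^{\ast} P)$: it produces sections of high powers $\omega_X^{\tensor d}$, not of $\omega_X$ twisted by topologically trivial line bundles. Concretely, Ein and Lazarsfeld (J.~Amer.~Math.~Soc.~10 (1997), 243--258) constructed threefolds of general type with generically finite Albanese map and $\chi(\omega_X) = 0$; for such $X$, generic vanishing gives $H^0(X, \omega_X \tensor P) = 0$ for generic $P \in \Pic^0(X)$, so the corner you want to protect is already zero. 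Worse, the very result of Green--Lazarsfeld that the paper quotes (\cite{GreenLazarsfeld}, Theorem~3.1) asserts that a nowhere-vanishing one-form \emph{forces} $H^0(X, \omega_X \tensor a^{\ast} \Pal) = 0$ for general $\alpha$; so under your standing hypothesis the corner genuinely does vanish generically, and no contradiction with general type can be reached along these lines. (If your step were correct, the conjecture would follow from Green--Lazarsfeld alone, with no Hodge modules at all -- a red flag.)

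What is missing is the mechanism that converts bigness into a positivity statement that generic vanishing can actually contradict; this is the core of the paper's proof. Bigness gives a section of $\omega_X^{\tensor d} \tensor \fu L^{-1}$ for some possibly large $d$ and some ample $L$ on $A$. The paper takes the degree-$d$ branched covering determined by that section, resolves it to get $h = f \circ \varphi \colon Y \to A$, and uses the injections $B^{-1} \tensor \OmX^k \to \varphil \OmY^k$ of \eqref{eq:morphism} (where $B = \omega_X \tensor \fu L^{-1}$) to plant the sheaf $L \tensor \fl \OX$ inside the lowest graded piece $\hl \omY \simeq \gr_{g-n}^F \! \Mmod$ of the Hodge module $\Mmod = \shH^0 \hp(\OY, F)$. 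The final contradiction is then not that twisted sections of $\omega_X$ are nonzero, but that $H^0(A, \Pal \tensor L \tensor \fl \OX) \neq 0$ because a general translate of an ample divisor cannot contain $f(X)$. Moreover, the covering makes the morphism to $A$ more singular, which is precisely why one cannot simply re-run the Koszul/Green--Lazarsfeld argument on $Y$, and why the Hodge-module generic vanishing package of \cite{PopaSchnell} is needed -- applied to $\gr^F$ of $h_{+}(\OY, F)$, i.e.\ to the pushed-forward Koszul complex, not to the individual sheaves $\Omega_Y^p$ appearing in your spectral sequence. In short, your proposal is the paper's own ``$d = 1$'' sketch with an incorrect replacement for its final step; the branched covering and the transfer of positivity to the abelian variety are the missing ideas.
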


The statement is a tautology in the case of curves; for surfaces, it was proved by
Carrell \cite{Carrell}, and for threefolds by Luo and Zhang \cite{LZ}. It was also
known to be true if the canonical bundle of $X$ is ample \cite{Zhang}, and more
generally when $X$ is minimal \cite{HK}. For varieties that are not necessarily of
general type, Luo and Zhang proposed the following more general version of the
conjecture, in terms of the Kodaira dimension $\kappa(X)$, and proved it in the case
of threefolds \cite{LZ}.

\begin{conjecture}[Luo-Zhang] \label{conj:strong}
Let $X$ be a smooth complex projective variety, and let $W \subseteq H^0(X, \OmX^1)$
be a linear subspace such that $Z(\omega)$ is empty for every nonzero one-form
$\omega \in W$. Then the dimension of $W$ can be at most $\dim X - \kappa(X)$.
\end{conjecture}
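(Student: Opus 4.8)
The plan is to translate the hypothesis on zero loci into a geometric statement about the Albanese map, and then to deduce the bound on $\dim W$ from an upper bound on $\kappa(X)$ supplied by generic vanishing. First I would pass to the Albanese variety. Since every holomorphic one-form on $X$ is pulled back from $A = \Alb(X)$ along the Albanese map $a \colon X \to A$, the subspace $W \subseteq H^0(X,\OmX^1) = H^0(A, \Omega_A^1)$ corresponds to a subspace of the cotangent space of $A$ at the origin. The common kernel of the forms in $W$ is a translation-invariant distribution, whose integral subtorus $A' \subseteq A$ produces a quotient $q \colon A \to B \defeq A/A'$ of abelian varieties with $\dim B = \dim W$ and $W = q^* H^0(B, \Omega_B^1)$. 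Setting $f = q \circ a \colon X \to B$, each nonzero $\omega \in W$ equals $f^*\eta$ for a unique nonzero translation-invariant one-form $\eta$ on $B$, and at a point $x \in X$ one has $\omega(T_x X) = \eta\bigl(\im(df_x)\bigr)$. Hence $Z(\omega)$ is empty for every nonzero $\omega \in W$ precisely when no nonzero $\eta$ annihilates $\im(df_x)$ for any $x$, i.e.\ when $df_x$ is surjective for all $x$. Thus the hypothesis is equivalent to $f$ being a smooth surjective morphism onto $B$, with fibers of dimension $\dim X - \dim W$. After Stein factorization --- noting that a finite \'etale cover of an abelian variety is again abelian --- I may assume the fibers are connected; write $F$ for a fiber, so $\dim F = \dim X - \dim W$.

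It now suffices to prove the inequality $\kappa(X) \le \dim F$, since then $\dim W = \dim X - \dim F \le \dim X - \kappa(X)$. Because $\omega_B \cong \shO_B$ is trivial, the relative canonical bundle formula gives $\omega_X \cong \omega_{X/B}$, so that
\[
	H^0\bigl(X, \omega_X^{\otimes m}\bigr) \cong H^0\bigl(B, f_* \omega_{X/B}^{\otimes m}\bigr)
\]
for every $m \ge 1$. As the general fiber is smooth, the pushforward $\mathcal F_m \defeq f_* \omega_{X/B}^{\otimes m}$ is a coherent sheaf on $B$ whose generic rank equals the plurigenus $P_m(F) = h^0(F, \omega_F^{\otimes m})$, which grows like $m^{\kappa(F)} \le m^{\dim F}$.

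The heart of the matter, and the step I expect to be the main obstacle, is to control $h^0(B, \mathcal F_m)$ by this rank. Here I would invoke the generic vanishing theory for Hodge modules on abelian varieties \cite{PopaSchnell}: it shows that the pluricanonical pushforwards $\mathcal F_m$ are generic vanishing sheaves on $B$, admitting a Chen--Jiang-type decomposition into summands pulled back from quotient abelian varieties (twisted by torsion points of $\Pic^0(B)$). This structure prevents the global sections of $\mathcal F_m$ from growing faster than its rank, forcing $h^0\bigl(X, \omega_X^{\otimes m}\bigr)$ to grow at most like $m^{\dim F}$, and hence $\kappa(X) \le \dim F$. The delicate point is precisely that one must understand $\omega_X^{\otimes m}$ for all $m \ge 2$, not merely the canonical bundle $\omega_X$ itself: ordinary generic vanishing for pushforwards of canonical bundles addresses only the case $m = 1$, and it is exactly the strength of the Hodge-module formalism that supplies the required statement for the non-canonical line bundles $\omega_X^{\otimes m}$.
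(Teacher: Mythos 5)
Your first step contains a fatal gap: a linear subspace $W \subseteq H^0(A, \Omega_A^1)$ (with $A = \Alb X$) is of the form $q^* H^0(B, \Omega_B^1)$ for a quotient of abelian varieties $q \colon A \to B$ only when the common kernel of $W$ is the Lie algebra of an abelian subvariety of $A$. A translation-invariant distribution on an abelian variety does \emph{not} integrate to a subtorus in general: its leaf through the origin is an analytic subgroup that is typically dense and non-algebraic, and the abelian subvarieties of $A$ form only a countable family. The hypothesis of \conjectureref{conj:strong} does nothing to rule this out --- take $X = A$ itself, where every nonzero one-form is zero-free, so that $W$ may be a completely arbitrary subspace; no quotient $B$ with $W = q^* H^0(B, \Omega_B^1)$ exists. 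So the conjecture cannot be reduced to the case of a smooth morphism onto an abelian variety: that statement is \corollaryref{cor:smooth} of the paper, a strictly special case which the paper \emph{deduces from} the conjecture, not the reverse. The paper's actual route is different: it constructs one specific quotient $f \colon X \to A_0$ of the Albanese using the Iitaka fibration (there Kawamata's argument guarantees that the images of the Iitaka fibers are translates of a genuine abelian subvariety, so the quotient exists), proves \theoremref{thm:main} --- every one-form pulled back along this highly non-smooth $f$ has a zero, under the hypothesis $H^0(X, \omega_X^{\otimes d} \otimes f^* L^{-1}) \neq 0$ --- and then concludes by linear algebra, since $W$ can meet $f^* H^0(A_0, \Omega_{A_0}^1)$ only in $0$ and that subspace has codimension at most $\dim X - \kappa(X)$.

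Your second step has an independent gap. The claim that a Chen--Jiang-type decomposition of $\mathcal{F}_m = f_* \omega_{X/B}^{\otimes m}$ ``prevents the global sections from growing faster than its rank'' is false as a general principle: M-regular sheaves on abelian varieties can have far more sections than their rank (an ample line bundle is M-regular of rank one), and the same is true of their pullbacks from quotients twisted by torsion points. Tellingly, your argument for this step never uses smoothness of $f$, yet without smoothness the conclusion $\kappa(X) \leq \dim F$ is simply wrong: take $C_1 \to E$ a branched double cover of an elliptic curve with $g(C_1) = 2$, take $C_2$ of genus $2$, and let $X = C_1 \times C_2 \to E$; then $\mathcal{F}_m$ has generic rank growing like $m$, while $h^0(X, \omega_X^{\otimes m})$ grows like $m^2$ and $\kappa(X) = 2 > 1 = \dim F$ --- even though these pushforwards do admit Chen--Jiang decompositions. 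So the entire difficulty, namely how the absence of zeros (or smoothness) interacts with the positivity coming from general type, is concentrated exactly at the step you defer to ``generic vanishing for Hodge modules.'' That interaction is the real content of the paper's proof --- the branched-cover construction together with the perverse-coherence and torsion argument of \propositionref{prop:surjective} --- and it is not a formal consequence of any decomposition theorem for the pushforwards themselves.
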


\newpar{The main result}

Since every holomorphic one-form on $X$ is the pullback of a holomorphic one-form
from the Albanese variety $\Alb X$, it is natural to consider an arbitrary morphism from
$X$ to an abelian variety, and to ask under what conditions the pullback of a
holomorphic one-form must have a zero at some point of $X$. Our main result
is the following theorem, which implies both of the above conjectures.

\begin{theorem} \label{thm:main}
Let $X$ be a smooth complex projective variety, and let $f \colon X \to A$ be a
morphism to an abelian variety. If $H^0 \bigl( X, \omX^{\tensor d} \tensor \fu L^{-1}
\bigr) \neq 0$ for some integer $d \geq 1$ and some ample line bundle $L$ on $A$,
then $Z(\omega)$ is nonempty for every $\omega$ in the image of $\fu \colon
H^0(A, \Omega_A^1) \to H^0(X, \Omega_X^1)$.
\end{theorem}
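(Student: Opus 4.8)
The plan is to argue by contradiction, turning the nonvanishing hypothesis and the hypothetical absence of zeros into incompatible statements about the pushforward $\fp \QQ_X^H[\dim X]$ and its characteristic variety. Write $A = V/\Lambda$, so that $H^0(A,\Omega_A^1) = V^\ast$ and $T^\ast A = A \times V^\ast$, and let $\omega = \fu\eta$ for some $\eta \in V^\ast$. If $\eta = 0$ then $\omega = 0$ and $Z(\omega) = X$, so I assume $\eta \neq 0$ and, aiming for a contradiction, that $Z(\omega) = \emptyset$.

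First I would reduce to the case $d = 1$. After pulling everything back along a suitable isogeny $A' \to A$ (which changes neither the abelian-variety setting, nor the general-type hypothesis, nor, up to the induced \'etale map, the zero locus of $\omega$), I may assume $L = L_0^{\tensor d}$ for an ample $L_0$. Then $\omX^{\tensor d} \tensor \fu L^{-1} = (\omX \tensor \fu L_0^{-1})^{\tensor d}$, so the given section defines a degree-$d$ cyclic cover $\pi \colon Y \to X$; replacing $X$ by a resolution of $Y$ and $f$ by the composite $Y \to X \to A$, the pullback of $\omega$ is still the pullback of $\eta$, any zero of it produces a zero of $\omega$, and a direct computation with $\pi_\ast \omega_Y$ gives $H^0\bigl(Y, \omY \tensor (f\pi)^\ast L_0^{-1}\bigr) \neq 0$. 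Thus it suffices to treat $d = 1$: I have $0 \neq s \in H^0(X, \omX \tensor \fu L_0^{-1})$ with $L_0$ ample.

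The key geometric translation is that zeros of $\omega$ are detected microlocally. Under the cotangent correspondence $T^\ast X \xleftarrow{df^\ast} X \times_A T^\ast A \to T^\ast A$, the point $(x,\eta)$ lies in $(df^\ast)^{-1}$ of the zero section of $T^\ast X$ exactly when $df_x^\ast \eta = 0$, i.e. when $\omega(x) = 0$; hence $Z(\omega) = \emptyset$ is equivalent to $A \times \{\eta\}$ being disjoint from the image of that locus in $T^\ast A$, which by Kashiwara's estimate for proper pushforward contains $\Ch\bigl(\fp \QQ_X^H[\dim X]\bigr)$, and therefore $\Ch(M)$ for every Hodge-module summand $M$. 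Since $\gr^F \mathcal{M}$ is a coherent sheaf on $T^\ast A = A \times V^\ast$ supported on $\Ch(M)$, disjointness from $A \times \{\eta\}$ forces the complex $\gr^F \DR(\mathcal{M})$, with its differential twisted by $\eta$, to be exact by a Koszul argument. Through the generic vanishing package for Hodge modules on abelian varieties, this exactness places the flat line bundle determined by $\eta$ outside the cohomology support loci of $M$; in particular the bottom piece of the Hodge filtration, which by Saito's theory recovers $\fl \omX$, has vanishing cohomology after the corresponding twist.

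On the other hand, the section $s$ provides an inclusion $L_0 \into \fl \omX$ of the ample line bundle $L_0$. Since $L_0 \tensor P$ stays ample for every flat line bundle $P$, I get $H^0(A, \fl \omX \tensor P) \supseteq H^0(A, L_0 \tensor P) \neq 0$ for all such $P$, so the cohomology support locus of $\fl \omX$ is everything; matching this against the vanishing above at the point attached to $\eta$ yields the contradiction. I expect the main obstacle to be precisely this last matching: upgrading the microlocal disjointness to an honest cohomology-vanishing statement at the \emph{non-classical} flat bundle determined by $\eta$ — which lives in the Higgs-field directions of the moduli of flat bundles rather than in $\Pic^0(A)$ — and controlling its interaction with the bottom Hodge piece $\fl\omX$. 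This is exactly where the full strength of the cited generic vanishing theory for Hodge modules, together with strictness of the Hodge filtration, is indispensable.
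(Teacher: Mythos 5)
Your reduction to $d=1$ is where the argument breaks, and it breaks irreparably in the form you state it. After the isogeny and the degree-$d$ cyclic cover defined by the section of $(\omX \tensor \fu L_0^{-1})^{\tensor d}$, you pass to a resolution $Y$ of the cover, with $\varphi \colon Y \to X$ the composite map and $h = f \circ \varphi$, and you assert that ``any zero of [$\hu\eta$] produces a zero of $\omega$.'' This implication is false: at a point $y \in Y$ one has $(\hu\eta)_y = (d\varphi_y)^{\ast}\bigl(\omega_{\varphi(y)}\bigr)$, which vanishes as soon as $\omega_{\varphi(y)}$ annihilates the image of $d\varphi_y$. Over the branch divisor $Z(s)$, and over the exceptional locus of the resolution, the differential $d\varphi_y$ fails to be surjective, so $\hu\eta$ acquires zeros there no matter what $\omega$ does on $X$. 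Consequently the contradiction hypothesis $Z(\omega) = \emptyset$ on $X$ does \emph{not} transfer to $Z(\hu\eta) = \emptyset$ on $Y$; and in the other direction, the conclusion that $\hu\eta$ has a zero on $Y$ is vacuous (it is automatic, since $Y$ itself satisfies the hypotheses of the theorem with $d=1$) and yields nothing about $Z(\omega)$. This is precisely the obstruction the paper flags: ``the additional singularities in the morphism to $A$ prevent the simple argument from above from going through.'' The correct proof cannot be a reduction to $d=1$.

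What the paper does instead, following the two-object strategy of Viehweg--Zuo, is to keep both $X$ and $Y$ in play simultaneously. It takes the Hodge module $\Mmod = \shH^0 \hp(\OY, F)$ on $A$ --- the object with good Hodge-theoretic and generic-vanishing properties, coming from $Y$ --- but inside its associated graded $\gr_{\bullet}^F \! \Mmod \simeq R^0 \hl C_{Y,\bullet}$ it singles out the graded submodule $\shF_{\bullet}$ defined as the image of $R^0 \fl \bigl( B^{-1} \tensor C_{X,\bullet} \bigr)$, i.e.\ of a complex pushed forward from $X$, not from $Y$. It is this submodule whose support lies in $(f \times \id)(Z_f)$ --- the locus controlled by zeros of one-forms pulled back to $X$ --- while its piece in degree $g-n$ is still the ample subsheaf $L \tensor \fl \OX$. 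The contradiction is then extracted from the tension between these two facts via the Fourier--Mukai and perverse-coherent-sheaf machinery of \cite{PopaSchnell}. Your sketch has only one object (the pushforward from $Y$), so the support information coming from $X$ --- exactly the information destroyed by the ramification of $\varphi$ --- is lost; the microlocal and cohomology-support-loci tools you invoke at the end have nothing left to act on that still constrains $Z(\omega)$ itself.
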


The condition in the theorem is equivalent to asking that, up to birational
equivalence, the morphism $f$ should factor through the Iitaka fibration of $(X,
K_X)$.

\newpar{Smooth morphisms to abelian varieties}

One consequence of \conjectureref{conj:strong} is the following.

\begin{corollary} \label{cor:smooth}
If $f \colon X \to A$ is a smooth morphism from a smooth complex projective variety
onto an abelian variety, then $\dim A \leq \dim X - \kappa(X)$.
\end{corollary}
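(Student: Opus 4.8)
The plan is to deduce the corollary directly from \conjectureref{conj:strong}, which is established by \theoremref{thm:main}. The strategy is to produce a linear subspace $W \subseteq H^0(X, \Omega_X^1)$ whose dimension equals $\dim A$ and on which no nonzero form has a zero; the conjecture then yields $\dim A = \dim W \leq \dim X - \kappa(X)$, which is exactly the claim.

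First I would take $W = \im\bigl( \fu \colon H^0(A, \Omega_A^1) \to H^0(X, \Omega_X^1) \bigr)$. Because $A$ is an abelian variety, the space $H^0(A, \Omega_A^1)$ has dimension exactly $\dim A$, its sections being the translation-invariant one-forms that trivialize the cotangent bundle. Since $f$ is onto, hence dominant, the pullback map $\fu$ is injective on one-forms: a nonzero form cannot pull back to zero along a dominant morphism. Therefore $\dim W = \dim A$.

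The key geometric input is that smoothness of $f$ forces every nonzero form in $W$ to be nowhere vanishing. Indeed, a nonzero $\omega \in H^0(A, \Omega_A^1)$ is nowhere zero on $A$ by the invariance just mentioned. At an arbitrary point $x \in X$, smoothness of $f$ means the differential $df_x \colon T_x X \to T_{f(x)} A$ is surjective; composing this surjection with the nonzero functional $\omega_{f(x)}$ shows that $(\fu \omega)_x = \omega_{f(x)} \circ df_x$ is again nonzero. Hence $Z(\fu \omega)$ is empty for every nonzero $\omega$, so $W$ satisfies the hypothesis of \conjectureref{conj:strong}, and the conclusion follows.

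I do not expect a genuine obstacle here, since all the substance lies in \theoremref{thm:main}: the corollary is a formal consequence once one records the two standard facts that $\fu$ is injective (using surjectivity of $f$) and that $\dim H^0(A,\Omega_A^1) = \dim A$ (using that $A$ is abelian). The only point deserving a line of care is the nowhere-vanishing argument, which is precisely where the smoothness hypothesis enters.
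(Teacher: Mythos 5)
Your proposal is correct and follows the same route as the paper: smoothness of $f$ makes every nonzero pulled-back invariant one-form nowhere vanishing, and then \conjectureref{conj:strong} applied to $W = \im(\fu)$, together with injectivity of $\fu$, gives $\dim A \leq \dim X - \kappa(X)$. The extra details you supply (triviality of $\Omega_A^1$, injectivity of $\fu$ via dominance, the pointwise composition $\omega_{f(x)} \circ df_x$) are exactly the facts the paper leaves implicit.
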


\begin{proof}
Since $f$ is smooth, $Z(\fu \omega)$ is empty for every nonzero $\omega \in
H^1(A, \OmA^1)$. The desired inequality then follows from \conjectureref{conj:strong}
because $\fu$ is injective.
\end{proof}

In particular, there are no nontrivial smooth morphisms from a variety of general
type to an abelian variety; this was proved by Viehweg and Zuo \cite{ViehwegZuo} when
the base is an elliptic curve, and then by Hacon and Kov\'acs \cite{HK}
in general. Together with subadditivity, \corollaryref{cor:smooth} has the following
application.

\begin{corollary}
If $f \colon X \to A$ is a smooth morphism onto an abelian variety, and if all
fibers of $f$ are of general type, then $f$ is birationally isotrivial. 
\end{corollary}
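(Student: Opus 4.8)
The plan is to play \corollaryref{cor:smooth} against the subadditivity of the Kodaira dimension, in the refined form that keeps track of the variation of the family. Set $n = \dim X$ and $a = \dim A$, and let $F$ be a general fiber of $f$. (Passing to the Stein factorization, which over an abelian variety is a finite \'etale cover and hence again an abelian variety of the same dimension, I may assume the fibers of $f$ are connected.) Since $f$ is smooth and surjective, $F$ is a smooth projective variety with $\dim F = n - a$, and it is of general type by hypothesis, so $\kappa(F) = n - a$.

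First I would record the upper bound coming from the generic vanishing input: \corollaryref{cor:smooth} gives $a \leq n - \kappa(X)$, that is,
\[
	\kappa(X) \leq n - a.
\]
Next I would invoke the lower bound coming from positivity of direct images. Because the general fiber is of general type, the subadditivity theorem of Viehweg and Koll\'ar applies and yields
\[
	\kappa(X) \geq \kappa(F) + \Var(f) = (n - a) + \Var(f),
\]
where $\Var(f) \geq 0$ denotes the variation of $f$. Comparing the two displays forces $\Var(f) \leq 0$, hence $\Var(f) = 0$.

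Finally, by the standard characterization of the variation, $\Var(f) = 0$ means exactly that $f$ becomes birational to a product after a generically finite base change, i.e. that $f$ is birationally isotrivial, which finishes the argument. The crux is the second inequality: the naive subadditivity $\kappa(X) \geq \kappa(F) + \kappa(A)$ would only reproduce $\kappa(X) \geq n - a$ and would say nothing about moduli, so the essential point is to use the version of subadditivity that incorporates $\Var(f)$. This is available precisely because the fibers are assumed to be of general type, and it is the step I expect to carry all the weight.
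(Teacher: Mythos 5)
Your proof is correct and is essentially the paper's own argument: both combine the bound $\kappa(X) \leq \dim X - \dim A$ from \corollaryref{cor:smooth} with Koll\'ar's subadditivity theorem $\kappa(F) + \Var(f) \leq \kappa(X)$ for fibers of general type, forcing $\Var(f) = 0$. The extra remark about Stein factorization is a harmless refinement the paper omits.
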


\begin{proof}
We have to show that $X$ becomes birational to a product after a generically finite
base-change; it suffices to prove that $\Var(f) = 0$, in Viehweg's terminology. Since
the fibers of $f$ are of general type, we know from the main result of \cite{Kollar} that
\[
	\kappa(F) + \Var(f) \leq \kappa(X).
\]
On the other hand, $\kappa(X) \leq \dim F$ by \corollaryref{cor:smooth}, and so we
are done.
\end{proof}

Successive versions of this result go back to Migliorini \cite{Migliorini}
(for smooth families of minimal surfaces of general type over an elliptic curve), 
Kov\'acs \cite{Kovacs-smooth} (for smooth families of minimal varieties of general
type over an elliptic curve), and Viehweg-Zuo \cite{ViehwegZuo} (for smooth families
of varieties of general type over an elliptic curve). Over a base of arbitrary
dimension, Kov\'acs \cite{Kovacs-nef} and Zhang \cite{Zhang} have shown that if the
canonical bundles of the fibers is assumed to be ample, the morphism $f$ must
actually be isotrivial.

\newpar{Proof of the conjectures}

Before we get into the details of the proof, we explain how \theoremref{thm:main}
implies \conjectureref{conj:strong}  -- and hence also \conjectureref{conj:main},
which is a special case.

The statement of the conjecture is vacuous when $\kappa(X) = -\infty$, and so we
shall assume from now on that $\kappa(X) \geq 0$. Let $\mu \colon X' \to X$ be a
birational modification of $X$, such that $g \colon X' \to Z$ is a smooth model for
the Iitaka fibration. The general fiber of $g$ is then a smooth projective variety of
dimension $\delta(X) = \dim X - \kappa(X)$ and Kodaira dimension zero, and therefore
maps surjectively to its own Albanese variety \cite{Kawamata}*{Theorem~1}. By a
standard argument \cite{Kawamata}*{Proof of Theorem~13},
the image in $\Alb X$ of every fiber of $g$ must be a translate of a single
abelian variety. Letting $A$ denote the quotient of $\Alb X$ by this abelian variety,
we obtain $\dim A \geq \dim H^0(X, \Omega_X^1) - \delta(X)$, and the following
commutative diagram:
\begin{equation} \label{diag:Iitaka}
\begin{tikzcd}
	X' \dar{g} \rar{\mu} & X \dar{f} \\
	Z \rar & A
\end{tikzcd}
\end{equation}
By construction, $Z$ dominates $f(X)$, and so we conclude from \theoremref{thm:main}
that all holomorphic one-forms in the image of $\fu \colon H^0(A, \Omega_A^1) \to
H^0(X, \Omega_X^1)$ have a nonempty zero locus. Since this subspace has codimension at
most $\delta(X)$ in the space of all holomorphic one-forms, we see that
\conjectureref{conj:strong} must be true.

\section*{Proof of the theorem}

\newpar{From Higgs bundles to Hodge modules}

Let $f \colon X \to A$ be a morphism from a smooth projective
variety to an abelian variety. We define $V = H^0(A, \Omega_A^1)$, and introduce the
set
\[
	Z_f = \menge{(x, \omega) \in X \times V}{(\fu \omega)(T_x X) = 0}.
\]
To prove \theoremref{thm:main}, we have to show that the projection from $Z_f$ to
$V$ is surjective; what we will actually show is that $(f \times \id)(Z_f)
\subseteq A \times V$ maps onto $V$.

\newpar{Summary of our method}

In our proof, we use techniques from generic vanishing theory and from Saito's theory
of Hodge modules. To motivate the use of generic vanishing theory, suppose
for a moment that one could take $d = 1$ in \theoremref{thm:main}; in other words,
suppose that there was a nontrivial morphism
\[
	\fu L \to \omX.
\]
By one of the generic vanishing results due to Green and Lazarsfeld
\cite{GreenLazarsfeld}*{Theorem~3.1}, the existence of a holomorphic one-form $\fu
\omega$ without zeros implies that $H^0(X, \omX \tensor \fu \Pal) = 0$ for general
$\alpha \in \Pic^0(A)$. This leads to the conclusion that
\begin{equation} \label{eq:outcome}
	H^0(A, L \tensor \Pal \tensor \fl \OX) = 0,
\end{equation}
which contradicts the fact that $L$ is ample, because a general translate of an ample
divisor does not contain $f(X)$.

In reality, only some large power of $\omX \tensor \fu L^{-1}$ will have a section --
after an \'etale cover that makes $L$ sufficiently divisible -- and so we are forced
to work on a resolution of singularities of a $d$-fold branched covering of $X$. Now
the additional singularities in the morphism to $A$ prevent the simple argument from
above from going through. To fix this problem, we use an extension of generic
vanishing theory to Hodge modules, developed in \cite{PopaSchnell}. With the
help of this tool, we can still obtain \eqref{eq:outcome} when there is a holomorphic
one-form $\fu \omega$ without zeros.

\newpar{The method of Viehweg and Zuo}
\label{par:VZ}

Our method owes a lot to the paper \cite{ViehwegZuo}, in which Viehweg and Zuo show,
among other things, that there are no smooth morphisms from a complex projective
variety $X$ of general type to an elliptic curve $E$. To orient the reader, we shall
briefly recall a key step in their proof \cite{ViehwegZuo}*{Lemma~3.1}; a similar
technique also appears in the work of Kov\'acs \cites{Kovacs-smooth,Kovacs-log}.
Using a carefully chosen resolution of singularities of a certain branched covering
of $X$, Viehweg and Zuo produce a logarithmic Higgs bundle $\bigoplus
\mathcal{E}^{p,q}$ on the elliptic curve $E$, whose Higgs field
\[
	\theta^{p,q} \colon \mathcal{E}^{p,q} 
		\to \mathcal{E}^{p-1,q+1} \tensor \Omega_E^1(\log S')
\]
has logarithmic poles along a divisor $S'$; by construction, $S'$ contains the set of
points $S$ where the morphism from $X$ to $E$ is singular. They also construct a
Higgs subbundle $\bigoplus \mathcal{F}^{p,q}$ with two properties: the restriction
of the Higgs field satisfies
\[
	\theta^{p,q} \bigl( \mathcal{F}^{p,q} \bigr) 
		\subseteq \mathcal{F}^{p-1,q+1} \tensor \Omega_E^1(\log S),
\]
and $\mathcal{F}^{r,0}$ is an ample line bundle (where $r = \dim X - 1$). Since any
subbundle of $\ker \theta^{p,q}$ has degree $\leq 0$, it follows that $S$ cannot be
empty: otherwise, the image of $\mathcal{F}^{r,0}$ under some iterate of the Higgs
field would be an ample subbundle of $\ker \theta^{p,q}$.

\newpar{From Higgs bundles to Hodge modules}

The associated graded of a Hodge module may be thought of as a generalization of
a Higgs bundle. Recall that a Hodge module on a
smooth complex algebraic variety $X$ is a very special kind of filtered $\Dmod$-module
$(\Mmod, F)$; in particular, $\Mmod$ is a regular holonomic left $\Dmod_X$-module,
and $F_{\bullet} \Mmod$ is a good increasing filtration by $\OX$-coherent subsheaves.
The associated graded 
\[
	\gr_{\bullet}^F \! \Mmod = \bigoplus_{k \in \ZZ} F_k \Mmod \big\slash F_{k-1} \Mmod
\]
is coherent over the symmetric algebra $\Sym \shT_X$. It therefore determines a
coherent sheaf $\gr^F \! \Mmod$ on the cotangent bundle $T^{\ast} X$, whose support
is the so-called characteristic variety $\Ch(\Mmod)$ of the $\Dmod$-module. A longer
summary of Saito's theory can be found in \cite{PopaSchnell}*{Sections~2.1--2},
and of course in the introduction to \cite{Saito}.

\newpar{Another reason for Hodge modules}

For later use, we point out a connection between \theoremref{thm:main}
and Hodge modules, having to do with the properties of the set $(f \times
\id)(Z_f)$.  
The structure sheaf $\OX$ is naturally a left $\Dmod_X$-module; the direct image
functor for $\Dmod$-modules takes it to a complex $\fp \OX$ of regular holonomic
$\Dmod_A$-modules. According to Kashiwara's estimate for the behavior of the
characteristic variety,
\begin{equation} \label{eq:Kashiwara}
	\Ch \bigl( \fp \OX \bigr) \subseteq
		(f \times \id) \bigl( \df^{-1}(0) \bigr) = (f \times \id)(Z_f),
\end{equation}
where the notation is as in the following diagram:
\begin{equation} \label{eq:Japanese}
\begin{tikzcd}
X \times V \dar{f \times \id} \rar{\df} & T^{\ast} X \\
A \times V
\end{tikzcd}
\end{equation}
One consequence of Saito's theory is that $\OX$, equipped with the obvious filtration
($\gr_0^F \OX = \OX$), is actually a Hodge module.
Because $f$ is projective, $\fp(\OX, F)$ (with the induced filtration) is then a
complex of Hodge modules on $A$, and Saito's Decomposition Theorem
\cite{Saito}*{Th\'eor\`eme~5.3.1} gives us a non-canonical splitting
\begin{equation} \label{eq:decomposition}
	\fp(\OX, F) \simeq \bigoplus_{i \in \ZZ} \shH^i \fp(\OX, F) \decal{-i}
\end{equation}
in the derived category of filtered $\Dmod$-modules. This means that the set $(f \times
\id)(Z_f)$ contains the characteristic varieties of the Hodge modules
$\shH^i \fp(\OX, F)$.

\newpar{The idea behind our proof} 
\label{par:idea}

The above considerations suggest a way to generalize the construction in
\cite{ViehwegZuo} to the setting of \theoremref{thm:main}. Namely,
suppose that we manage to find a Hodge module $(\Mmod, F)$ on the abelian
variety $A$, and a graded $\Sym \shT_A$-submodule 
\begin{equation} \label{eq:objects}
	\shF_{\bullet} \subseteq \gr_{\bullet}^F \! \Mmod.
\end{equation}
Denote by $\shF$ and $\gr^F \! \Mmod$ the associated coherent sheaves on $T^{\ast} A
= A \times V$, and suppose that the following three conditions are satisfied:
\begin{enumerate}[label=(\arabic{*}), ref=(\arabic{*})]
\item \label{en:condition-1}
There is a morphism $h \colon Y \to A$ from a smooth projective variety, such that
$(\Mmod, F)$ is a direct summand of some $\shH^i \hp(\OY, F)$.
\item \label{en:condition-2}
The support of $\shF$ is contained in the set $(f \times \id)(Z_f) \subseteq A
\times V$.
\item \label{en:condition-3}
For some $k \in \ZZ$, the sheaf $\shF_k$ is isomorphic to $L \tensor \fl \OX$, where
$L$ is an ample line bundle on $A$.
\end{enumerate}
If that is the case, we can use the generic vanishing theory for Hodge
modules that we developed in \cite{PopaSchnell} to show that $Z_f$ projects
onto $V$.

\begin{proposition} \label{prop:surjective}
If a Hodge module $(\Mmod, F)$ and a graded $\Sym \shT_A$-module
$\shF_{\bullet}$ with the above properties exist, then the projection from $Z_f$ to $V$
must be surjective.
\end{proposition}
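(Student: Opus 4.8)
The plan is to exploit the three hypotheses through the generic vanishing package for Hodge modules developed in \cite{PopaSchnell}. The overall strategy mirrors the Viehweg--Zuo argument recalled in \parref{par:VZ}: condition~\ref{en:condition-3} provides an ample line bundle $L$ sitting inside the graded module $\shF_{\bullet}$ at some level $k$, and this ampleness should ultimately be incompatible with a one-form $\fu\omega$ that has no zeros. The bridge between ``no zeros'' and a vanishing statement is the generic vanishing theory: the existence of a nowhere-vanishing $\fu\omega$ forces the support of $\gr^F\!\Mmod$ to avoid the divisor $A \times \{\omega\}$ in a neighborhood of the zero section, or more precisely constrains the cohomology of the Fourier--Mukai transform of the filtered pieces.

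I would argue by contradiction. Suppose the projection $Z_f \to V$ is not surjective, so there exists $\omega \in V$ such that $\fu\omega$ is a holomorphic one-form on $X$ without zeros. The point of condition~\ref{en:condition-2} is that $\Supp \shF \subseteq (f\times\id)(Z_f)$, and by \eqref{eq:Kashiwara} this set is cut out, fiber by fiber over $V$, by the zero loci $Z(\fu\omega)$. Thus the assumption that $\fu\omega$ has no zeros means that the fiber of $\Supp\shF$ over the point $\omega$ is empty; geometrically, $\shF$ is ``transverse'' to the direction $\omega$. First I would translate this transversality into an exactness statement: the one-form $\omega \in V$ acts on the graded $\Sym\shT_A$-module $\shF_{\bullet}$ by the Higgs-type map $\shF_k \to \shF_{k+1}$, and the hypothesis forces the associated Koszul-type complex built from $\omega$ to be exact away from the zero section, reflecting that $\shF$ has no support in the $\omega$-direction.

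Next I would feed this into the generic vanishing machinery. The key input from \cite{PopaSchnell} is that for the Hodge module $(\Mmod,F)$, which by condition~\ref{en:condition-1} is a direct summand of some $\shH^i\hp(\shO_Y,F)$ and hence has the generic vanishing property on the abelian variety $A$, the cohomology of the filtered pieces twisted by a general $\Pal \in \Pic^0(A)$ can be controlled. Combining the transversality of $\shF$ with respect to $\omega$ with generic vanishing should yield the vanishing
\[
	H^0\bigl(A, \shF_k \tensor \Pal\bigr) = 0
\]
for general $\alpha$, paralleling the Green--Lazarsfeld conclusion in \parref{par:VZ}. By condition~\ref{en:condition-3}, $\shF_k \cong L \tensor \fl\shO_X$, so this reads $H^0(A, L \tensor \Pal \tensor \fl\shO_X) = 0$, which is precisely the relation \eqref{eq:outcome}.

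The contradiction then comes exactly as in the toy argument of the introduction: since $L$ is ample and a general translate of an ample divisor does not contain $f(X)$, the group $H^0(A, L\tensor\Pal\tensor\fl\shO_X)$ cannot vanish, and we reach an absurdity. Hence no such $\omega$ exists, and $Z_f$ projects onto $V$. The main obstacle, and the technical heart of the proof, is the second step: making precise how the emptiness of the fiber of $\Supp\shF$ over $\omega$ converts into a genuine cohomology vanishing through the generic vanishing theorem for Hodge modules. This requires carefully relating the Higgs-field action of $\omega$ on $\shF_{\bullet}$ to the Fourier--Mukai or Laumon transform used in \cite{PopaSchnell}, and ensuring that passing to a direct summand in condition~\ref{en:condition-1} preserves both the generic vanishing property and the identification of $\shF_k$ in condition~\ref{en:condition-3}. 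I expect the bookkeeping around which cohomological degree and which filtration level $k$ yield the ample piece to be delicate, but structurally the argument should follow the pattern set by \eqref{eq:outcome}.
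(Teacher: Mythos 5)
Your skeleton agrees with the paper's proof in its outer layers: assume the projection $Z_f \to V$ is not surjective, use condition (2) to constrain $\shF$, and derive the final contradiction from condition (3) because $f(X)$ cannot lie in a general translate of an ample divisor. But the step you explicitly defer as ``the technical heart'' is the entire content of the paper's argument, and the mechanism you sketch for it would not work as described. The emptiness of $\Supp \shF$ over $\omega$ (in fact over the whole complement of the image of $Z_f$, which is a proper closed cone in $V$) by itself implies nothing about $H^0(A, \Pal \tensor \shF_k)$: a graded $\Sym V^{\ast}$-module can perfectly well be nonzero while supported on a proper closed cone of $V$, and no Koszul-type exactness in the single direction $\omega$ rules that out. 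Note also that generic vanishing is a property of $\gr^F \! \Mmod$, not of $\shF$ (your first paragraph conflates the two): the support of $\gr^F \! \Mmod$ need not avoid $A \times \{\omega\}$ at all, since $\hu \omega$ may acquire zeros on $Y$ even when $\fu \omega$ has none on $X$.

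What actually closes the gap is a torsion-freeness statement: for general $\alpha \in \Ah$, the sheaf $p_{2\ast} \bigl( \pu_1 \Pal \tensor \gr^F \! \Mmod \bigr)$ on $V$ is locally free. Granting this, condition (2) makes $p_{2\ast} \bigl( \pu_1 \Pal \tensor \shF \bigr)$ a torsion subsheaf of a torsion-free sheaf, hence zero; since $V$ is affine, this kills the whole graded module $\bigoplus_k H^0(A, \Pal \tensor \shF_k)$, in particular the piece $H^0(A, \Pal \tensor L \tensor \fl \OX)$, giving exactly the contradiction you describe. The local freeness is where condition (1) and \cite{PopaSchnell} really enter, and it is not a statement about any particular direction $\omega$: one forms the Fourier--Mukai transform $E = \derR \Phi_P(\gr^F \! \Mmod)$ on $\Ah \times V$ and uses that (a) $E$ is a perverse coherent sheaf, (b) the supports of its higher cohomology sheaves lie in finite unions of translates of triple tori of codimension at least two, and (c) the dual complex $\derR \shHom(E, \shO)$ has the same properties; together these force $E$ to restrict to a single locally free sheaf on $\{\alpha\} \times V$ for general $\alpha$, which after untwisting is the assertion above. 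Without this ingredient, or an equivalent torsion-freeness input, the vanishing $H^0(A, \Pal \tensor \shF_k) = 0$ never materializes, so your proof is incomplete at its central step.
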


\begin{proof}
Let $P$ be the normalized Poincar\'e bundle on $A \times \Ah$. Using its pull-back to
$A \times \Ah \times V$ as a kernel, we define the Fourier-Mukai transform of $\gr^F
\! \Mmod$ to be the complex of coherent sheaves
\[
	E = \derR \Phi_P(\gr^F \! \Mmod) 
		= \derR (p_{23})_{\ast} \Bigl( p_{13}^{\ast} \bigl( \gr^F \! \Mmod \bigr) 
			\tensor p_{12}^{\ast} P \Bigr)
\] 
on $\Ah \times V$. When the Hodge module $(\Mmod, F)$ comes from a morphism to an
abelian variety as in \ref{en:condition-1}, the complex $E$ has special properties
\cite{PopaSchnell}*{Section~4.5}:
\begin{aenumerate}
\item \label{en:GVT-a}
$E$ is a \define{perverse coherent sheaf}, meaning that its cohomology sheaves
$\shH^{\ell} E$ are zero for $\ell < 0$, and are supported in codimension at least
$2\ell$ otherwise.
\item \label{en:GVT-b}
The union of the supports of all the higher cohomology sheaves of $E$ is a finite
union of translates of triple tori in $\Ah \times V$.
\item \label{en:GVT-c}
The dual complex $\derR \shHom(E, \shO)$ has the same properties.
\end{aenumerate}
Here a triple torus, in Simpson's terminology, means a subset of the form 
\[
	\im \Bigl( \varphi^{\ast} \colon \hat{B} \times H^0(B, \OmB^1)
		\to \Ah \times H^0(A, \OmA^1) \Bigr),
\]
where $\varphi \colon A \to B$ is a morphism to another abelian variety. Roughly
speaking, \ref{en:GVT-a} is derived using a vanishing theorem by Saito
\cite{PopaSchnell}*{Lemma~2.5}, whereas \ref{en:GVT-b} is derived using a result by Arapura
and Simpson about cohomology support loci of Higgs bundles \cite{Arapura}, with the
help of the decomposition in \eqref{eq:decomposition}.

It follows that the $0$-th cohomology sheaf of the complex $E$ is locally free
outside a finite union of translates of triple tori of codimension at least two.
Indeed, the locus where $\shH^0 E$ is not locally free is
\[
	\bigcup_{\ell \geq 1} \Supp R^{\ell} \shHom \bigl( \shH^0 E, \shO \bigr) 
		\subseteq \bigcup_{\ell \geq 1} \Supp R^{\ell} \shHom(E, \shO) \cup 
		\bigcup_{\ell \geq 1} \Supp \shH^{\ell} E,
\]
which is of the asserted kind because $\derR \shHom(E, \shO)$ also satisfies
\ref{en:GVT-a} and \ref{en:GVT-b}. Since the higher cohomology
sheaves of $E$ are supported in a finite union of translates of triple tori of
codimension at least two, the restriction of the complex $E$ to the subspace
$\{\alpha\} \times V$ is a single locally free sheaf for general $\alpha \in \Ah$. 

Translating this back into a statement on $A \times V$ involves a few simple 
manipulations with the formula for the Fourier-Mukai transform; the result is that
\[
	p_{2 \ast} \bigl( \pu_1 \Pal \tensor \gr^F \! \Mmod \bigr)
\]
is a locally free sheaf on $V$. Here $p_1 \colon A \times V \to A$ and $p_2 \colon A
\times V \to V$ are the projections to the two factors, and $\Pal$ is the line bundle
corresponding to $\alpha \in \Ah$.

Now suppose that the projection from $Z_f$ to $V$ was \emph{not} surjective. Because
the support of $\shF$ is contained in $(f \times \id)(Z_f)$ by \ref{en:condition-2},
the subsheaf
\[
	p_{2\ast} \bigl( \pu_1 \Pal \tensor \shF \bigr)
		\subseteq p_{2\ast} \bigl( \pu_1 \Pal \tensor \gr^F \! \Mmod \bigr)
\]
would then be torsion, and therefore zero. As $V$ is a vector space, we are
forced to the conclusion that the corresponding graded $\Sym V^{\ast}$-module
\[
	\bigoplus_{k \in \ZZ} H^0 \bigl( A, \Pal \tensor \shF_k \bigr),
\]
is also zero. But this contradicts our assumption in \ref{en:condition-3} that $\shF_k
\simeq L \tensor \fl \OX$ for some $k \in \ZZ$. In fact, if $\Pal \tensor L \tensor
\fl \OX$ has no global sections, then every global section of $\Pal \tensor L$ has to
vanish set-theoretically along $f(X)$; but then $f(X)$ is contained in a general
translate of an ample divisor, which is absurd.
\end{proof}

\newpar{Producing sections}

The remainder of the paper is devoted to constructing the two objects in
\eqref{eq:objects} under the assumptions of \theoremref{thm:main}. From now on, we
let $X$ be a smooth complex projective variety of dimension $n$ and Kodaira dimension
$\kappa(X) \geq 0$. We also assume that we have a morphism $f \colon X \to A$ to an
abelian variety, such that $\omX^{\tensor d} \tensor \fu L^{-1}$ has a section for
some $d \geq 1$ and some ample line bundle $L$ on $A$. Using the geometry of abelian
varieties, this can be improved as follows.

\begin{lemma}
After a finite \'etale base change on $A$, we can find an ample line bundle $L$
such that the $d$-th power of $\omega_X \tensor \fu L^{-1}$ has a section for some $d
\geq 1$.
\end{lemma}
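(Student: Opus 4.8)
The plan is to pull everything back along a well-chosen isogeny so as to make the ample bundle divisible by $d$. I would take the isogeny to be multiplication by $d$, namely $[d] \colon A \to A$, which is finite étale of degree $d^{2\dim A}$, and form the fiber product $X' = X \times_A A$ of $f$ with $[d]$, with projections $\mu \colon X' \to X$ and $f' \colon X' \to A$ satisfying $f \circ \mu = [d] \circ f'$. Because $[d]$ is étale, so is $\mu$; hence $X'$ is smooth and projective (after passing, if necessary, to a connected component on which the pulled-back section is nonzero), and $\omXp \cong \mu^{\ast} \omX$. Pulling back the given section along the faithfully flat map $\mu$ then produces a nonzero section of
\[
	\mu^{\ast}\bigl(\omX^{\tensor d} \tensor \fu L^{-1}\bigr) \cong \omXp^{\tensor d} \tensor \fpu \bigl([d]^{\ast} L\bigr)^{-1},
\]
so everything hinges on the structure of $[d]^{\ast} L$.

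The key point, and the step I expect to be the only real obstacle, is that $[d]^{\ast} L$ is a $d$-th power in $\Pic(A)$. Since $[d]$ acts as multiplication by $d^2$ on $H^2(A, \ZZ)$, the bundles $[d]^{\ast} L$ and $L^{\tensor d^2}$ have the same first Chern class, so their difference $R = [d]^{\ast} L \tensor (L^{\tensor d^2})^{-1}$ lies in $\Pic^0(A)$. The group $\Pic^0(A) \cong \Ah$ is divisible, so I can choose $S \in \Pic^0(A)$ with $S^{\tensor d} \cong R$, whence
\[
	[d]^{\ast} L \cong L^{\tensor d^2} \tensor S^{\tensor d} \cong \bigl(L^{\tensor d} \tensor S\bigr)^{\tensor d}.
\]
Setting $M = L^{\tensor d} \tensor S$, which is ample because it is numerically equivalent to the ample bundle $L^{\tensor d}$, the section above becomes a nonzero section of $\bigl(\omXp \tensor \fpu M^{-1}\bigr)^{\tensor d}$. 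Renaming $X'$, $f'$, $M$ back to $X$, $f$, $L$ then gives the statement, with the same value of $d$.

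All the remaining ingredients are formal: smoothness of $X'$ and the isomorphism $\omXp \cong \mu^{\ast} \omX$ follow at once from $[d]$ being étale, and nonvanishing of the pulled-back section follows from surjectivity of $\mu$. The one genuinely arithmetic input is the extraction of the $d$-th root $S$ of $R$ inside $\Pic^0(A)$; this is precisely what forces the étale base change and is the only place where the group structure on $A$ is used in an essential way. I expect the standard formula $[d]^{\ast} L \equiv L^{\tensor d^2}$ modulo $\Pic^0(A)$, together with the divisibility of $\Pic^0(A)$, to dispatch it cleanly, so that the real work of the lemma is just this root extraction.
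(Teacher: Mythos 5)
Your proof is correct, and it shares the paper's overall strategy---an \'etale base change along a multiplication map of $A$ so that the ample bundle becomes a $d$-th power---but the mechanism for producing the $d$-th root is genuinely different. The paper pulls back along $\decal{2d} \colon A \to A$ rather than $\decal{d}$: by the theorem of the cube, $\decal{m}^{\ast} L_1 \simeq L_1^{\tensor (m^2+m)/2} \tensor \bigl((-1)^{\ast} L_1\bigr)^{\tensor (m^2-m)/2}$, and for $m = 2d$ both exponents, $d(2d+1)$ and $d(2d-1)$, are divisible by $d$, so that $\decal{2d}^{\ast} L_1$ is literally the $d$-th power of the ample bundle $L_1^{\tensor (2d+1)} \tensor \bigl((-1)^{\ast} L_1\bigr)^{\tensor (2d-1)}$, with no correction term. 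Your version pulls back along $\decal{d}$, where the same circle of ideas only gives $\decal{d}^{\ast} L \equiv L^{\tensor d^2}$ modulo $\Pic^0(A)$, and you then invoke divisibility of $\Pic^0(A)$ to absorb the discrepancy into a $d$-th root $S$, together with the fact that ampleness only depends on the numerical (or algebraic) equivalence class. Both routes work: yours uses a smaller isogeny (degree $d^{2\dim A}$ instead of $(2d)^{2\dim A}$) at the cost of a root extraction and a numerical-equivalence argument, while the paper's choice of $2d$ is engineered precisely so that the divisibility is automatic and the proof is one line. Your treatment of the surrounding formalities (smoothness and projectivity of $X'$, the isomorphism $\omega_{X'} \simeq \mu^{\ast} \omX$, nonvanishing of the pulled-back section) is also fine; in fact no care is needed in choosing a connected component, since every component of $X'$ maps finite \'etale onto the connected variety $X$, so the pulled-back section is nonzero on each of them.
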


\begin{proof}
Fix an ample line bundle $L_1$ on $A$, and choose $d \geq 1$ such that
$\omX^{\tensor d} \tensor \fu L_1^{-1}$ has a section. Let $\decal{2d} \colon A \to A$
denote multiplication by $2d$; then $\decal{2d}^{\ast} L_1 \simeq L^{\tensor d}$ for some
ample line bundle $L$ on $A$, which clearly does the job.
\end{proof}

Since the conclusion of \theoremref{thm:main} is unaffected by finite \'etale
morphisms, we may assume for the remainder of the argument that the $d$-th power
of the line bundle $B = \omega_X \tensor \fu L^{-1}$ has a nontrivial section; for
the sake of convenience, we shall take $d \geq 1$ to be the smallest integer with this
property.

\newpar{Constructing a branched cover}

A nontrivial section $s \in H^0 \bigl( X, B^{\tensor d} \bigr)$ defines a branched
covering $\pi \colon X_d \to X$ of degree $d$, unramified outside the divisor $Z(s)$;
see \cite{EV}*{\S3} for details. Since $d$ is minimal, $X_d$ is irreducible; let $\mu
\colon Y \to X_d$ be a resolution of singularities that is an isomorphism over the
complement of $Z(s)$, and define $\varphi = \pi \circ \mu$ and $h = f \circ \varphi$.
The following commutative diagram shows all the relevant morphisms:
\begin{equation} 
\begin{tikzcd}
Y \rar{\mu} \arrow[bend right=20]{drr}{h} \arrow[bend left=40]{rr}{\varphi} 
		& X_d \rar{\pi} & X \dar{f} \\
 & & A
\end{tikzcd}
\end{equation}
By construction, $X_d$ is embedded in the total space of the line bundle $B$, and
so the pullback $\piu B$ has a tautological section; the induced morphism $\varphiu
B^{-1} \to \OY$ is an isomorphism over the complement of $Z(s)$. After composing it
with $\varphiu \OmX^k \to \OmY^k$, we obtain for every $k = 0, 1, \dotsc, n$ an
injective morphism
\[
	\varphiu \bigl( B^{-1} \tensor \OmX^k \bigr) \to \OmY^k,
\]
which is actually an isomorphism over the complement of $Z(s)$. Pushing forward to
$X$, and using the fact that $\OX \to \varphil \OY$ is injective, we find that the
morphisms
\begin{equation} \label{eq:morphism}
	B^{-1} \tensor \OmX^k \to \varphil \OmY^k
\end{equation}
are also injective.

\newpar{Building a morphism of complexes}

Let $S = \Sym V^{\ast}$ be the symmetric algebra on the dual of $V = H^0(A,
\Omega_A^1)$, and consider the complex of graded $\OX \tensor S$-modules
\[
	C_{X, \bullet} = \Bigl\lbrack 
		\OX \tensor S_{\bullet-g} \to \OmX^1 \tensor S_{\bullet-g+1} \to \dotsb
			\to \OmX^n \tensor S_{\bullet-g+n} \Bigr\rbrack,
\]
placed in cohomological degrees $-g, \dotsc, 0$, where $g = \dim A$ and $n = \dim X$.
The differential in the complex is induced by the evaluation morphism $V \tensor \OX
\to \OmX^1$. Concretely, let $\omega_1, \dotsc, \omega_g \in V$ be a basis, and
denote by $s_1, \dotsc, s_g \in S_1$ the dual basis; then the formula for the
differential is
\[
	\OmX^p \tensor S_{\bullet-g+p} \to \OmX^{p+1} \tensor S_{\bullet-g+p+1}, \quad 
		\theta \tensor s \mapsto 
			\sum_{i=1}^g \bigl( \theta \wedge \fu \omega_i \bigr) \tensor s_i s.
\]
We use similar notation on $Y$ as well.

\begin{lemma} \label{lem:morphism}
There is a morphism of complexes of graded $\OA \tensor S$-modules
\[
	\derR \fl \bigl( B^{-1} \tensor C_{X, \bullet} \bigr) \to \derR \hl C_{Y, \bullet},
\]
induced by the individual morphisms in \eqref{eq:morphism}.
\end{lemma}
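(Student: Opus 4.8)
The plan is to construct the morphism in two stages: first an honest morphism of complexes of graded $\OX \tensor S$-modules
\[
	\beta \colon B^{-1} \tensor C_{X,\bullet} \to \varphil C_{Y,\bullet},
\]
and then to derive it. For each $k$, the map in \eqref{eq:morphism} is obtained by adjunction from the morphism $\varphiu(B^{-1} \tensor \OmX^k) \to \OmY^k$, which factors as the tautological trivialization $\varphiu B^{-1} \to \OY$ tensored with the pullback of $k$-forms $\varphiu \OmX^k \to \OmY^k$; denote the resulting sheaf map by $\beta_k \colon B^{-1} \tensor \OmX^k \to \varphil \OmY^k$. Tensoring each $\beta_k$ with the identity on the graded piece $S_{\bullet-g+k}$ gives components that preserve both the cohomological degree and the internal $S$-degree. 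Here the degree shifts on the two sides agree because $f$ and $h$ both map to $A$, so that $g = \dim A$ and the same $V$ and $S$ are used in defining $C_{X,\bullet}$ and $C_{Y,\bullet}$.

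The heart of the matter is that these components commute with the differentials. Since the differential of $C_{X,\bullet}$ wedges the form part with $\fu \omega_i$ and records this in the $S$-grading via $s_i$, while that of $\varphil C_{Y,\bullet}$ wedges with $\hu \omega_i$, commutativity reduces to the pointwise identity
\[
	\beta_{k+1}\bigl( \theta \wedge \fu \omega_i \bigr) = \beta_k(\theta) \wedge \hu \omega_i
\]
for a local section $\theta$ of $B^{-1} \tensor \OmX^k$ and each $i$. This holds because the pullback of forms is compatible with wedge products and $\hu \omega_i = \varphiu \fu \omega_i$, while the tautological trivialization of the $B^{-1}$-factor is merely carried along and does not interact with the wedge. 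I expect this verification -- keeping track of the $B^{-1}$-twist and the $S$-grading through the wedge -- to be the only place requiring care, though it is ultimately a formal computation.

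It then remains to pass to the derived category. Composing $\beta$ with the canonical morphism $\varphil C_{Y,\bullet} \to \derR \varphil C_{Y,\bullet}$, induced by any resolution of $C_{Y,\bullet}$ by $\varphil$-acyclic sheaves, produces a morphism $B^{-1} \tensor C_{X,\bullet} \to \derR \varphil C_{Y,\bullet}$ in the derived category of graded $\OX \tensor S$-modules. Applying $\derR \fl$ and using the composition of derived direct images $\derR \fl \circ \derR \varphil = \derR \hl$ yields the desired morphism
\[
	\derR \fl \bigl( B^{-1} \tensor C_{X,\bullet} \bigr) \to \derR \hl C_{Y,\bullet}.
\]
It is essential to pass through $\derR \varphil$ rather than $\varphil$: because $\varphi$ is only generically finite and factors through a resolution of singularities, it has nontrivial higher direct images, so $\derR \hl C_{Y,\bullet}$ cannot simply be replaced by $\derR \fl(\varphil C_{Y,\bullet})$.
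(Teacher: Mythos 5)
Your proposal is correct and follows essentially the same route as the paper: the paper's entire proof is the observation that the maps in \eqref{eq:morphism} commute with the differentials because $\varphi^{\ast}(f^{\ast}\omega) = h^{\ast}\omega$ for all $\omega \in V$, which is exactly your key identity. The additional derived-functor bookkeeping you supply --- composing with $\varphi_{\ast} C_{Y,\bullet} \to \derR\varphi_{\ast} C_{Y,\bullet}$ and using $\derR f_{\ast} \circ \derR\varphi_{\ast} \simeq \derR h_{\ast}$ --- is the standard formal step that the paper leaves implicit, and you carry it out correctly.
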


\begin{proof}
The morphisms in \eqref{eq:morphism} commute with the differentials in the two
complexes because $\varphiu \bigl( \fu \omega \bigr) = \hu \omega$ for every $\omega
\in V$.
\end{proof}

\newpar{Controlling the support}

We denote by $C_X$ the complex of coherent sheaves on $X \times V$ associated with
the complex of graded $\OX \tensor S$-modules $C_{X, \bullet}$. 

\begin{lemma} \label{lem:support}
The support of $C_X$ is equal to $Z_f \subseteq X \times V$.
\end{lemma}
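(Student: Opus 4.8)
My plan is to recognize $C_X$ as a Koszul complex on $X \times V$ and to read off its support from the zero locus of a single tautological section. Writing $\pr \colon X \times V \to X$ for the projection, I would first note that the coherent sheaf associated with the graded module $\OmX^p \tensor S$ is the pullback $\pr^{\ast} \OmX^p = \bigwedge^p \pr^{\ast}\OmX^1$, which is locally free; so, forgetting the grading shifts, $C_X$ is a bounded complex of vector bundles whose terms are the exterior powers of $E = \pr^{\ast}\OmX^1$. Sheafifying the differential $\theta \tensor s \mapsto \sum_i (\theta \wedge \fu \omega_i) \tensor s_i s$, each $s_i \in S_1 = V^{\ast}$ becomes the corresponding linear function on $V$ pulled back to $X \times V$, and the differential becomes, up to an irrelevant sign, wedge product $\eta \wedge \argbl$ with the section
\[
	\eta = \sum_{i=1}^g \pr^{\ast}(\fu \omega_i) \cdot s_i \in \Gamma\bigl( X \times V,\, E \bigr),
\]
whose value at $(x, \omega)$ is the cotangent vector $(\fu \omega)_x \in \OmX^1 \tensor k(x)$. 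Since $\eta \wedge \eta = 0$ — the forms $\fu\omega_i \wedge \fu\omega_j$ being antisymmetric while the products $s_i s_j$ are symmetric — this identifies $C_X$ with the Koszul complex $\bigl( \bigwedge^{\bullet} E,\ \eta \wedge \argbl \bigr)$, whose zero locus is exactly $Z(\eta) = Z_f$ by the definition of $Z_f$. As $\Supp C_X$ is the union of the supports of the cohomology sheaves $\shH^p(C_X)$, the grading and the cohomological placement are irrelevant from here on.

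To get $\Supp C_X \subseteq Z_f$, I would argue that away from $Z_f$ the section $\eta$ is nowhere vanishing, so locally one may choose $\xi \in \Gamma\bigl( \pr^{\ast}\shT_X \bigr) = \Gamma\bigl( E^{\ast} \bigr)$ with $\inner{\xi}{\eta} = 1$. The standard Koszul homotopy
\[
	\iota_\xi(\eta \wedge \alpha) + \eta \wedge \iota_\xi(\alpha) = \inner{\xi}{\eta}\,\alpha = \alpha
\]
then shows that $C_X$ is contractible, hence exact, over such a neighborhood, so every $\shH^p(C_X)$ vanishes off $Z(\eta) = Z_f$.

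For the reverse inclusion I would exhibit a single cohomology sheaf supported on all of $Z_f$, using the last term $\bigwedge^n E = \pr^{\ast}\omX$. Its cohomology is the cokernel of $\eta \wedge \argbl \colon \bigwedge^{n-1} E \to \bigwedge^n E$; in a local frame $e_1, \dotsc, e_n$ of $E$ with $\eta = \sum_j a_j e_j$ one has $\eta \wedge \bigl( e_1 \wedge \dotsb \wedge \widehat{e_j} \wedge \dotsb \wedge e_n \bigr) = \pm a_j\, (e_1 \wedge \dotsb \wedge e_n)$, so the image is $(a_1, \dotsc, a_n) \cdot \bigwedge^n E$ and the cokernel is $\pr^{\ast}\omX \tensor \shO_{Z(\eta)}$. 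Because $\pr^{\ast}\omX$ is invertible, this sheaf is supported precisely on $Z(\eta) = Z_f$, and combining the two inclusions yields $\Supp C_X = Z_f$.

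The only genuine content is the identification in the first step: once one verifies that sheafifying the graded differential really produces wedging with the tautological section $\eta$, and that the $\Sym V^{\ast}$-grading merely records the natural $\CC^{\ast}$-scaling on $V$ and so is invisible to the support, both inclusions become entirely standard facts about the Koszul complex of a section of a vector bundle. I therefore expect this bookkeeping — passing correctly between the graded-module picture on $X$ and the geometric picture on $X \times V$ — to be where the care is needed, rather than either of the two support computations.
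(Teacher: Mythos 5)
Your proof is correct and follows essentially the same route as the paper: both identify $C_X$ with the Koszul complex of the tautological section whose value at $(x,\omega)$ is $(\fu\omega)_x$, the paper phrasing this as the pullback along $\df \colon X \times V \to T^{\ast}X$ of the Koszul resolution of the zero section, and concluding that $\Supp C_X = \df^{-1}(0) = Z_f$. You simply carry out the identification in place on $X \times V$ and spell out the two standard Koszul facts (contraction homotopy off the zero locus, cokernel computation on it) that the paper invokes implicitly.
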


\begin{proof}
Let $p_1 \colon X \times V \to X$ denote the first projection; then
\[
	C_X = \Bigl\lbrack 
		\pu_1 \OX \to \pu_1 \OmX^1 \to \dotsb \to \pu_1 \OmX^n \Bigr\rbrack,
\]
with differential induced by the tautological section of $\pu_1 \OmX^1$. This shows
that $C_X$ is equal to the pullback of the Koszul resolution for the structure sheaf of
the zero section in $T^{\ast} X$ via the morphism $\df \colon X \times V \to
T^{\ast} X$. In particular, $\Supp C_X$ is equal to $\df^{-1}(0) = Z_f$, in the
notation of \eqref{eq:Japanese}.
\end{proof}

\newpar{Relationship with Hodge modules}

The reason for introducing $C_{X, \bullet}$ is that it is closely related to the
direct image $\fp(\OX, F)$. In fact, we have the following refinement of Kashiwara's
estimate \eqref{eq:Kashiwara}.

\begin{lemma} \label{lem:Laumon}
The associated graded of the Hodge module $\shH^i \fp(\OX, F)$ is
\[
	\gr_{\bullet}^F \Bigl( \shH^i \fp(\OX, F) \Bigr) \simeq R^i \fl C_{X, \bullet}.
\]
\end{lemma}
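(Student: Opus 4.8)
The plan is to recognize this identity as an instance of Laumon's formula for the associated graded of a filtered direct image, specialized to the abelian variety $A$, together with the strictness of $\fp(\OX, F)$ guaranteed by Saito's theory. Recall first that $\gr^F(\OX, F)$, with $\gr_0^F \OX = \OX$, is simply the structure sheaf of the zero section of $T^{\ast} X$, and that on the abelian variety $A$ the cotangent bundle is trivial, $T^{\ast} A = A \times V$, so that $\gr^F \Dmod_A \simeq \OA \tensor S$ with $S = \Sym V^{\ast}$. In the notation of \eqref{eq:Japanese}, the two maps out of $X \times V = X \times_A T^{\ast} A$ are $\df \colon X \times V \to T^{\ast} X$ and $\varpi = f \times \id \colon X \times V \to A \times V$, which are exactly the maps entering Laumon's formula for $f$.

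First I would apply Laumon's formula, which computes the associated graded of the filtered direct image as a derived pushforward along the cotangent correspondence:
\[
	\gr^F \fp(\OX, F) \simeq \derR \varpi_{\ast} \derL \df^{\ast} \gr^F(\OX, F).
\]
Since $\gr^F \OX$ is the structure sheaf of the zero section, its derived pullback $\derL \df^{\ast} \gr^F \OX$ is computed by pulling back the Koszul resolution of that zero section; by the proof of \lemmaref{lem:support} this pullback is precisely the complex $C_X$ on $X \times V$, with Koszul differential given by wedging with the tautological section of $\pu_1 \OmX^1$. Translating coherent sheaves on $A \times V = T^{\ast} A$ back into graded $\OA \tensor S$-modules then identifies $\derR \varpi_{\ast} C_X$ with $\derR \fl C_{X, \bullet}$: under this dictionary the two differentials correspond, the factors $s_i \in S_1$ dual to $\omega_i$ recording the $\gr^F \Dmod_A$-action, so that the Koszul differential becomes the differential $\theta \tensor s \mapsto \sum_i (\theta \wedge \fu \omega_i) \tensor s_i s$ of $C_{X, \bullet}$.

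It remains to pass from the cohomology of the graded complex to the graded of the cohomology. Here I would invoke strictness: because $(\OX, F)$ underlies a Hodge module and $f$ is projective, Saito's theory ensures that $\fp(\OX, F)$ is strict, the same input that produces the splitting \eqref{eq:decomposition}. Strictness means exactly that $\gr^F$ commutes with taking cohomology sheaves, so
\[
	\gr_{\bullet}^F \bigl( \shH^i \fp(\OX, F) \bigr) \simeq \shH^i \bigl( \gr^F \fp(\OX, F) \bigr) \simeq R^i \fl C_{X, \bullet},
\]
which is the assertion.

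The part requiring the most care is not the conceptual structure but the bookkeeping of gradings and shifts: one must check that the normalization of the filtered direct-image functor matches the shift by $g = \dim A$ built into the indices $S_{\bullet - g + p}$ and the cohomological placement of $C_{X, \bullet}$, and that Laumon's formula is applied in its filtered (Rees-module) form, so that the $S$-grading corresponds on both sides. Once these normalizations are pinned down — using that $\omega_A$ is trivial, which removes the relative-canonical twist that would otherwise appear — the identification reduces to a routine comparison of complexes.
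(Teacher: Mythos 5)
Your proposal is correct and follows essentially the same route as the paper's proof, which also combines Laumon's formula (giving $\gr_{\bullet}^F \fp(\OX, F) \simeq \derR \fl C_{X,\bullet}$ in the derived category of graded $\OA \tensor S$-modules, via the citation to \cite{PopaSchnell}*{Proposition~2.11}) with the strictness of $\fp(\OX, F)$ from Saito's theory to commute $\gr^F$ with taking cohomology sheaves. Your version simply spells out the cotangent-correspondence and Koszul-complex details that the paper leaves to the reference.
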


\begin{proof}
This is proved in \cite{PopaSchnell}*{Proposition~2.11}. In a nutshell, a result
by Laumon gives the isomorphism $\gr_{\bullet}^F \fp(\OX, F) \simeq \derR \fl C_{X,
\bullet}$ in the derived category of graded $\OA \tensor S$-modules; to deduce the
assertion about individual cohomology sheaves, one has to use the fact that 
the complex $\fp(\OX, F)$ is strict \cite{Saito}*{Th\'eor\`eme~5.3.1}. This ensures
that every $\shH^i \fp(\OX, F)$ is again a filtered $\Dmod$-module; a priori, it is
only an object of a larger abelian category, due to the fact that the derived category
of filtered $\Dmod$-modules is the derived category of an exact category. Loosely
speaking, this means that, for Hodge modules, taking the associated graded commutes
with direct images; arbitrary filtered $\Dmod$-modules do not have this property.
\end{proof}

\newpar{Constructing a suitable Hodge module}

We are now in a position to carry out the construction suggested in \parref{par:idea}.
For the Hodge module $(\Mmod, F)$, we take $\shH^0 \hp(\OY, F)$; it is really a Hodge
module because the morphism $h$ is projective \cite{Saito}*{Th\'eor\`eme~5.3.1}. As
explained in \lemmaref{lem:Laumon}, one has an isomorphism
\[
	\gr_{\bullet}^F \! \Mmod \simeq R^0 \hl C_{Y, \bullet}
\]
as graded modules over $\Sym \shT_A = \OA \tensor S$. Now define
$\shF_{\bullet}$ to be the image of $R^0 \fl \bigl( B^{-1} \tensor C_{X, \bullet}
\bigr)$ in $R^0 \hl C_{Y, \bullet}$, using the the morphism provided by
\lemmaref{lem:morphism}. In this way, we get a graded $\Sym \shT_A$-submodule of
$\gr_{\bullet}^F \! \Mmod$ as in \eqref{eq:objects}.

\newpar{Concluding the proof}

It remains to check that $(\Mmod, F)$ and $\shF_{\bullet}$ have all the required properties.

\begin{proposition} \label{prop:construction}
$(\Mmod, F)$ and $\shF_{\bullet}$ satisfy the conditions in
\ref{en:condition-1}--\ref{en:condition-3}.
\end{proposition}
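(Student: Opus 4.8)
The plan is to check the three conditions separately; \ref{en:condition-1} and \ref{en:condition-2} are essentially formal, while \ref{en:condition-3} is where the geometry of the construction enters.

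Condition \ref{en:condition-1} holds by fiat: we have \emph{defined} $(\Mmod, F) = \shH^0 \hp(\OY, F)$ for the morphism $h \colon Y \to A$ out of the smooth projective resolution $Y$, so it is its own direct summand and \ref{en:condition-1} is satisfied with $i = 0$. For \ref{en:condition-2} I would argue purely on supports. By construction $\shF$ is a quotient of the sheaf $R^0 \fl \bigl( B^{-1} \tensor C_{X, \bullet} \bigr)$ on $A \times V$, so it suffices to bound the support of the latter. The complex $B^{-1} \tensor C_{X, \bullet}$ has associated sheaf $B^{-1} \tensor C_X$ on $X \times V$, and since $B^{-1}$ is a line bundle this has the same support as $C_X$, namely $Z_f$ by \lemmaref{lem:support}. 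As the pushforward here is taken along the projective morphism $f \times \id$, every cohomology sheaf of the derived pushforward of a sheaf supported on $Z_f$ is supported in the image $(f \times \id)(Z_f)$; in particular $\Supp \shF \subseteq (f \times \id)(Z_f)$, which is \ref{en:condition-2}.

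The substance of the proposition is \ref{en:condition-3}, and the key is to isolate the graded degree $k = g - n$. Because $\fl$ is $\OA$-linear and the grading is by the fixed symmetric algebra $S = \Sym V^{\ast}$, the functor $R^0 \fl$ may be applied one graded piece at a time, so I would compute the degree-$(g-n)$ piece of $B^{-1} \tensor C_{X, \bullet}$. There the term $\OmX^p \tensor S_{p-n}$ vanishes for every $p < n$, since the symmetric power carries a negative index, and survives only at $p = n$, where $S_0 = \OX$ and the top form bundle $\OmX^n = \omega_X$ sits in cohomological degree $0$. Using the defining identity $B^{-1} \tensor \omega_X = \fu L$, this graded piece collapses to the single sheaf $\fu L$ in degree $0$, and the projection formula yields
\[
	\bigl( R^0 \fl (B^{-1} \tensor C_{X, \bullet}) \bigr)_{g-n} \simeq \fl(\fu L) \simeq L \tensor \fl \OX.
\]

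It remains to see that $\shF_{g-n}$ is all of $L \tensor \fl \OX$ and not merely a quotient, i.e.\ that the map from \lemmaref{lem:morphism} is injective in degree $g - n$. The same bookkeeping shows that the degree-$(g-n)$ piece of $C_{Y, \bullet}$ collapses to $\OmY^n = \omega_Y$ in cohomological degree $0$ (here $\dim Y = \dim X = n$), so the induced map in this degree is obtained by applying $\fl$ to the sheaf morphism $\fu L = B^{-1} \tensor \OmX^n \to \varphil \OmY^n = \varphil \omega_Y$ of \eqref{eq:morphism}. That morphism is injective by construction; since $R^0 \fl$ is left exact, and in fact $R^0 \hl \omega_Y = \fl \varphil \omega_Y$ by the Leray spectral sequence for $h = f \circ \varphi$ (only the $(0,0)$ corner contributes in total degree $0$), the induced map $\fl(\fu L) \to R^0 \hl \omega_Y$ is injective. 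Hence $\shF_{g-n} \simeq L \tensor \fl \OX$, which is \ref{en:condition-3}. I expect this last injectivity to be the main obstacle: it is what forces one to track cohomological degrees precisely and to use that the morphisms \eqref{eq:morphism} themselves — rather than just the maps they induce after pushing forward — are injective.
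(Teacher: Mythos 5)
Your proposal is correct and follows essentially the same route as the paper's proof: condition \ref{en:condition-1} by construction, condition \ref{en:condition-2} via \lemmaref{lem:support} and properness of $f \times \id$, and condition \ref{en:condition-3} by observing that in graded degree $k = g-n$ the complexes collapse to $C_{X,k} = \omX$ and $C_{Y,k} = \omY$, so that the injective morphism $\fu L = B^{-1} \tensor \omX \to \varphil \omY$ from \eqref{eq:morphism} pushes forward (by left exactness of $\fl$ and $\hl \omY = \fl \varphil \omY$) to an injection $L \tensor \fl \OX \simeq \fl \fu L \into \hl \omY \simeq \gr_k^F \! \Mmod$, identifying $\shF_k$ with $L \tensor \fl \OX$. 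Your emphasis on the injectivity being the crux — so that $\shF_{g-n}$ is all of $L \tensor \fl \OX$ rather than a proper quotient — is exactly the point the paper's proof relies on as well.
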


\begin{proof}
It is obvious from the construction that \ref{en:condition-1} holds. To prove
\ref{en:condition-2}, note that $\shF$ is by definition a quotient of the coherent sheaf
\[
	R^0 (f \times \id)_{\ast} \bigl( p_1^{\ast} B^{-1} \tensor C_X \bigr).
\]
The complex in parentheses is supported in the set $Z_f$ by \lemmaref{lem:support};
consequently, the support of $\shF$ is contained in $(f \times \id)(Z_f)$, as
required. To finish up, we shall argue that \ref{en:condition-3} is true when $k =
g-n$. We clearly have $C_{X, k} = \omX$ and $C_{Y, k} = \omY$, and the morphism $\fu
L = B^{-1} \tensor \omX \to \varphil \omY$ in \eqref{eq:morphism} is injective. After
pushing forward to the abelian variety $A$, we find that the resulting morphism
\[
	L \tensor \fl \OX \simeq \fl \fu L \to \hl \omY
\]
is still injective. But $\gr_k^F \! \Mmod \simeq \hl \omY$, and so $\shF_k$ is 
isomorphic to $L \tensor \fl \OX$. 
\end{proof}

Now \propositionref{prop:surjective} shows that $Z_f$ projects onto $V$; this means
that every holomorphic one-form in the image of $\fu \colon H^0(A, \Omega_A^1) \to
H^0(X, \OmX^1)$ has a nonempty zero locus.  We have proved
\theoremref{thm:main}, \conjectureref{conj:strong}, and \conjectureref{conj:main}.

\newpar{Comparison with Viehweg-Zuo}

To illustrate how our proof works, we shall briefly compare it with the argument by
Viehweg and Zuo (sketched in \parref{par:VZ}); either method shows that there are no
smooth morphisms $f \colon X \to E$ from a variety of general type to an elliptic
curve. It is easy to see that $(f \times \id)(Z_f)$ dominates $V = H^0(E,
\Omega_E^1)$ if and only if $f$ has at least one singular fiber, so let us quickly
see how either method leads to a contradiction. 

In the case of Viehweg and Zuo, $f$ smooth implies that $S = \emptyset$, and hence
that the iterates under the Higgs field
\[
	\bigl( \theta^{r,0} \circ \theta^{r-1,1} \circ \dotsb \circ \theta^{p,q} \bigr)
		(\shF^{r,0}) \subseteq \shE^{p-1,q+1}
\]
of the ample line bundle $\shF^{r,0}$ are all nonzero (because subbundles of
$\ker \theta^{p,q}$ have degree $\leq 0$); but this is not possible because
$\theta^{0,r} = 0$. In other words, smoothness of $f$ and positivity of $\shF^{r,0}$
would prevent the Higgs field from being nilpotent, contradicting the fact that it
is always nilpotent.

In our case, let $\partial \in H^0(E, \shT_E)$ be a nonzero global vector
field. The method used in \propositionref{prop:surjective} produces a nonzero
graded $\CC \lbrack \partial \rbrack$-submodule
\[
	\bigoplus_{k \geq g-n} H^0(X, \Pal \tensor \shF_k)	
		\subseteq \bigoplus_{k \geq g-n} H^0 \bigl( X, \Pal \tensor \gr_k^F \! \Mmod \bigr),
\]
which is $\partial$-torsion if $(f \times \id)(Z_f) = E \times \{0\}$; but this is not
possible because the ambient module is free over $\CC \lbrack \partial \rbrack$. In
other words, smoothness of $f$ and positivity of $\shF_{g-n}$ would create a
nontrivial torsion submodule, contradicting the fact that the larger module is always
free.

More generally, our branched covering construction is nearly identical to the one in
\cite{ViehwegZuo}, except that we do not need to be as careful in choosing the initial
section of $B^{\tensor d}$ or the resolution of singularities; this is due to the
power of Saito's theory. The idea of having two objects -- one with good properties
coming from Hodge theory, the other encoding the singularities of the morphism $f$ --
also comes from Viehweg and Zuo. But the actual mechanism behind the
proof of \propositionref{prop:surjective} based on generic vanishing theory is
different, because Higgs bundles and Hodge modules have different properties.

\section*{Acknowledgements}

\newpar{Personal}

We thank Lawrence Ein, S\'andor Kov\'acs, and Mircea Musta\c{t}\u{a} for several
useful discussions, and especially Ein for discovering a serious mistake in an earlier
attempt to prove the conjectures. We also thank an anonymous referee for many
comments that have greatly improved  the presentation.

\newpar{Funding}

During the preparation of this paper, Mihnea Popa was partially supported by 
grant DMS-1101323 from the National Science Foundation, and Christian Schnell by
grant DMS-1331641.

\section*{References}

\begin{biblist}
\bib{Arapura}{article}{
	author={Arapura, Donu},
	title={Higgs line bundles, Green-Lazarsfeld sets, and maps of K\"ahler manifolds
		to curves},
	journal={Bull. Amer. Math. Soc. (N.S.)},
	volume={26},
	date={1992},	
	number={2},
	pages={310--314},
}
\bib{Carrell}{article}{
   author={Carrell, James B.},
   title={Holomorphic one forms and characteristic numbers},
   journal={Topology},
   volume={13},
   date={1974},
   pages={225--228},
}
\bib{EV}{book}{
   author={Esnault, H{\'e}l{\`e}ne},
   author={Viehweg, Eckart},
   title={Lectures on vanishing theorems},
   series={DMV Seminar},
   volume={20},
   publisher={Birkh\"auser Verlag},
   place={Basel},
   date={1992},
   pages={vi+164},
}
\bib{GreenLazarsfeld}{article}{
   author={Green, Mark},
   author={Lazarsfeld, Robert},
   title={Deformation theory, generic vanishing theorems, and some
   conjectures of Enriques, Catanese and Beauville},
   journal={Invent. Math.},
   volume={90},
   date={1987},
   number={2},
   pages={389--407},
}
\bib{HK}{article}{
   author={Hacon, Christopher D.},
   author={Kov\'acs, S\'andor J.},
   title={Holomorphic one-forms on varieties of general type},
   journal={Ann. Sci. \'Ecole Norm. Sup. (4)},
   volume={38},
   date={2005},
   number={4},
   pages={599--607},
}
\bib{Kawamata}{article}{
   author={Kawamata, Yujiro},
   title={Characterization of abelian varieties},
   journal={Compositio Math.},
   volume={43},
   date={1981},
   number={2},
   pages={253--276},
}
\bib{Kollar}{article}{
	author={Koll\'ar, J\'anos},
	title={Subadditivity of the Kodaira dimension: fibers of general type.},
	conference={
		title={Algebraic geometry}, 
		place={Sendai}, 
		date={1985}, 
	},
	pages={361--398}, 
	book={
		series={Adv. Stud. Pure Math.},
		volume={10}, 
		publisher={North-Holland}, 
		place={Amsterdam}, 
		date={1987},
	},
}
\bib{Kovacs-smooth}{article}{
   author={Kov{\'a}cs, S{\'a}ndor J.},
   title={Smooth families over rational and elliptic curves},
   journal={J. Algebraic Geom.},
   volume={5},
   date={1996},
   number={2},
   pages={369--385},
}
\bib{Kovacs-nef}{article}{
	author={Kov{\'a}cs, S{\'a}ndor J.},
	title={Families over a base with a birationally nef tangent bundle},
	journal={Math. Ann.},
	volume={308},
	date={1997}, 
	number={2},
	pages={347--359},
}
\bib{Kovacs-log}{article}{
   author={Kov{\'a}cs, S{\'a}ndor J.},
   title={Logarithmic vanishing theorems and Arakelov-Parshin boundedness
   for singular varieties},
   journal={Compositio Math.},
   volume={131},
   date={2002},
   number={3},
   pages={291--317},
}
\bib{LZ}{article}{
   author={Luo, Tie},
   author={Zhang, Qi},
   title={Holomorphic forms on threefolds},
   conference={
      title={Recent progress in arithmetic and algebraic geometry},
   },
   book={
      series={Contemp. Math.},
      volume={386},
      publisher={Amer. Math. Soc.},
      place={Providence, RI},
   },
   date={2005},
   pages={87--94},
}
\bib{Migliorini}{article}{
   author={Migliorini, Luca},
   title={A smooth family of minimal surfaces of general type over a curve
   of genus at most one is trivial},
   journal={J. Algebraic Geom.},
   volume={4},
   date={1995},
   number={2},
   pages={353--361},
}
\bib{PopaSchnell}{article}{
	author={Popa, Mihnea},
	author={Schnell, Christian},
	title={Generic vanishing theory via mixed Hodge modules},
	journal={Forum of Mathematics, Sigma},
	volume={1},	
	pages={e1},
	year={2013},
	doi={10.1017/fms.2013.1},
}
\bib{Saito}{article}{
   author={Saito, Morihiko},
   title={Modules de Hodge polarisables},
	journal={Publ. Res. Inst. Math. Sci.},
   volume={24},
   date={1988},
   number={6},
   pages={849--995},
}
\bib{ViehwegZuo}{article}{
	author={Viehweg, Eckart},
	author={Zuo, Kang},
	title={On the isotriviality of families of projective manifolds over curves},
	journal={J. Algebraic Geom.}, 
	volume={10},
	date={2001}, 
	pages={781--799},
}
\bib{Zhang}{article}{
   author={Zhang, Qi},
   title={Global holomorphic one-forms on projective manifolds with ample
   canonical bundles},
   journal={J. Algebraic Geom.},
   volume={6},
   date={1997},
   number={4},
   pages={777--787},
}
\end{biblist}

\end{document}